\newtheorem{theorem}{Theorem}[section]
\theoremstyle{definition}
\newtheorem{definition}[theorem]{Definition}
\newtheorem{defn}[theorem]{Definition}
\newtheorem{example}[theorem]{Example}
\newtheorem{problem}[theorem]{Problem}
\theoremstyle{remark}
\newtheorem{remark}[theorem]{Remark}
\newcommand{\C}{\mathcal{C}}
\newcommand{\A}{\mathbb{A}}
\newcommand{\R}{\mathbb{R}}
\newcommand{\Z}{\mathbb{Z}}
\newcommand{\M}{\mathcal{M}}
\DeclareMathOperator{\hocolim}{hocolim}
\renewcommand{\emptyset}{\varnothing}
\renewcommand{\tilde}[1]{\widetilde{#1}}
\newcommand{\cat}[1]{\mathcal{#1}}
\newcommand{\sF}{\mathscr{F}}
\newcommand{\sW}{\mathscr{W}}
\newcommand{\sQ}{\mathscr{Q}}
\DeclareMathOperator{\Hom}{Hom}
\DeclareMathOperator{\dom}{dom}
\DeclareMathOperator{\ho}{Ho}
\newcommand{\po}{\ar@{}[dr]|(.7){\Searrow}}
\newcommand{\pb}{\ar@{}[dr]|(.3){\Nwarrow}}
\title{On Colimits and Model Structures in Various Categories of Manifolds}
\author{David White}
\address{Department of Mathematics \\ Denison University
\\ Granville, OH 43023}
\email{david.white@denison.edu}
\begin{document}
\maketitle

\begin{abstract}
After explaining the importance of model categories in abstract homotopy theory, we provide concrete examples demonstrating that various categories of manifolds do not have all finite colimits, and hence cannot be model categories. We then consider various enlargements of our categories of manifolds, culminating in categories of presheaves. We explain how to produce model structures on these enlarged categories, culminating with answering an open problem involving Poincar\'{e} spaces.
\end{abstract}

\section{Introduction}

Abstract homotopy theory applies the techniques of classical homotopy theory in other areas of mathematics. This technique has led to Fields Medal winning work, and has resolved important open problems in diverse fields such as algebraic geometry, homological algebra, higher category theory, and representation theory, among others. Often, the setting for abstract homotopy theory is that of model categories. A model category has classes of morphisms--the weak equivalences, cofibrations, and fibrations--that satisfy certain axioms (see Definition \ref{defn:model-category}), and allow one to mimic the constructions of classical homotopy theory, such as CW approximation, path spaces, etc. The homotopy theory comes into play by inverting the weak equivalences to make them isomorphisms, which involves passing from the model category to its associated homotopy category.

Proof techniques in (abstract) homotopy theory tend to rely heavily on computation, construction, and induction, e.g., constructing some complicated object (usually via a tower of increasingly complicated objects) and then proving inductively that we can understand what's going on at each step and that in the limit these steps do what is required. For example, this is how the Postnikov tower is used, how Goodwillie calculus (also known as functor calculus) proceeds, and how the small object argument works \cite[Theorem 2.1.14]{hovey-book}. The steps of the construction often involve products, disjoint unions (or, more generally, coproducts), gluing objects together, and taking quotients. Similarly, in equivariant contexts, we often need to quotient by group actions, or study fixed points of group actions. Hence, an important axiom of a model category is that it has all limits and colimits. Even if one does abstract homotopy theory using $\infty$-categories instead of model categories, these fundamental considerations still apply, and hence having (at least finite) limits and colimits is essential to setting up a workable theory.

Since their introduction by Dan Quillen in 1967, researchers have continually expanded the list of examples of model categories. Model categories can be used to study the categories of topological spaces, simplicial sets, chain complexes of modules over a ring (with applications to Andre-Quillen cohomology), the stable module category from representation theory, categories of matrix factorizations, the category of categories itself (as well as groupoids and higher categories), various flavors of (equivariant) spectra (the setting for stable homotopy theory), quasi-coherent sheaves over a scheme, (directed) graphs, (pre)sheaves, dynamical systems (e.g., the theory of flows in computer science), and a plethora of abelian categories that admit cotorsion pairs (leading to a powerful way to study $Ext$ and $Tor$ functors).

Conspicuously absent from this list are the categories of smooth manifolds (required for differential topology), Riemannian manifolds (required for differential geometry), complex analytic spaces, and topological, symplectic, and piecewise-linear manifolds. The purpose of this paper is to explore how one might do abstract homotopy theory in these contexts. This paper grew out of a note that the author wrote in graduate school, and this paper is aimed at graduate students.

It is natural for students to ask whether these various categories of manifolds admit model structures. The biggest obstacle is the lack of colimits. Because these categories lack colimits, it is rare to find a categorical approach to the study of manifolds, and hence it is difficult for students to learn about the behavior of colimits in these categories. For this reason, after providing definitions we need from category theory in Section \ref{sec:background}, in Section \ref{sec:colimits}, we provide examples illustrating the lack of colimits. The differential topology literature is full of work-arounds for the failure of the category of manifold to be closed under certain colimits (such as quotienting by group actions), so in Section \ref{sec:specific-colimits} we explain those work-arounds and why they are insufficient to solve the problem of lack of colimits in general.

Once we realize that any reasonable category of manifold lacks colimits, it is natural to consider enlarged categories of manifolds such as orbifolds, diffeological spaces, and Poincar\'{e} spaces. We consider these options in Section \ref{sec:enlarge}. Finally, in Section \ref{sec:presheaves}, we explain a general procedure for embedding various categories of manifolds into presheaf categories, and endowing them with appropriate model structures. This procedure is inspired by the Fields Medal winning work of Vladimir Voevodsky that launched motivic homotopy theory as a way to use model categories to study the category of schemes in algebraic geometry. This section culminates with a positive answer to a problem posed by John Klein.

\section*{Acknowledgments}

This paper grew out of a draft that I wrote in 2012 while a PhD student at Wesleyan University. I am grateful to Mark Hovey for teaching me about model categories, to Ilesanmi Adeboye's encouragement in writing this paper and for all he taught me about manifolds and Lie groups, to Enxin Wu for teaching me about diffeological spaces and inspiring my interest in them, and to the referee for helpful comments. Lastly, I am grateful to John Klein for sharing the slides of his 2019 talk in Ohio, where he stated Problem \ref{problem:klein}.

\section{Category Theory Background} \label{sec:background}

In this section, we will recall necessary terminology from \cite{Maclane} and \cite{hovey-book}.

Let $\C$ be a category. Let $J$ be a small category (i.e., ob$(J)$ and mor$(J)$ are sets rather than classes). A \textbf{diagram} of type $J$ in $\C$ is a functor $F: J \to \C$. We say $J$ is the \textbf{indexing category} for the diagram. This definition means $F$ picks out an object of $\C$ for each $j\in ob(J)$ and then assigns $\C$-morphisms $F(f)$ for each $J$-morphism $f$.

\begin{example} \label{ex1}
If $J$ is the category $\bullet \gets \bullet \to \bullet$ of three objects and two non-identity morphisms (often called a \textbf{span}), then $F(J)$ consists of three objects in $\cat C$ and two morphisms connecting them, like $C\gets A \to B$.
\end{example}

\begin{definition}
A \textbf{colimit} of a diagram $F:J\to \C$ is an object $L\in \C$, that: 

\begin{enumerate}
\item admits morphisms $\phi_X:F(X)\to L$ for each $X\in J$,
\item for any $f:X\to Y$ in $J$, we have $\phi_Y \circ F(f) = \phi_X$, and
\item if $N$ is any other object in $\C$ that satisfies the two conditions above (i.e., admits morphisms from all elements in the diagram, commuting with the diagram's structure morphisms) then there is a unique morphism $L\to N$ making everything commute.
\end{enumerate}

\end{definition}

The third condition above is known as a \textbf{universal property}. We often say that $L$ is the `closest' object the diagram maps to. This property means that the colimit is unique up to isomorphism. When we say `small colimit' we mean an object that is the colimit of a small indexing category $J$. Note that colimits are sometimes called `direct limits' or `inductive limits' by algebraists. The dual notion of the colimit of a diagram $F:J\to \C$ is the \textbf{limit}, meaning an object that is universal with respect to the property of mapping to $F(J)$. Sometimes the phrase `inverse limit' is used in place of `limit.' In categories of manifolds, limits are almost always computed in the category $Top$ of topological spaces, hence do not pose a problem, so we will focus on colimits. The easiest way for me to remember the difference between colimits and limits is to remember that coproduct is a colimit and product is a limit. The product maps to both factors, just like the limit does. The coproduct is usually the disjoint union of the objects, and all objects map to it by inclusion. A category is called \textbf{cocomplete} if it has colimits for all diagrams coming from small indexing categories $J$. A category is \textbf{bicomplete} if it has all small limits and colimits.

\begin{example}
We return to Example \ref{ex1} of a span $\bullet \gets \bullet \to \bullet$. Below, we display the diagram $F(J)$ with colimit $L = B\coprod_A C$ included, and with the universality demonstrated, looks like:

\[
\xymatrix{A \ar[r]^f \ar[d]_h \po & B\ar[d]^g \ar@/^1pc/[ddr] & \\
C \ar@/_1pc/[drr] \ar[r]_k & B\coprod_A C \ar@{..>}[dr]& \\
 & & N}
\]

Here the dotted arrow exists by universality because of the existence of the curved arrows. This diagram is commutative, meaning the two paths from $A$ to $B\coprod_A C$ are equal (i.e., $g\circ f = k\circ h$), the two from $B$ to $N$ are equal, and the two from $C$ to $N$ are equal. The object $B\coprod_A C$ is called the \textbf{pushout} of $f$ and $h$, and the arrow $\Searrow$ is notation to let us know this is a pushout diagram.
\end{example}

Analyzing pushout diagrams is part of the bread-and-butter work of abstract homotopy theory. In the category of topological spaces, if $f$ and $h$ are inclusions, then the object $B\coprod_A C$ is a copy of $B$ glued together with a copy of $C$ along the copy of $A$ that sits in both. Thinking of pushouts and colimits as gluing can be a useful point of view, and helps clarify why homotopy theorists want to work with categories that have all colimits. If $A$ is the initial object, then we simply write $\coprod$ without a subscript, and $B\coprod C$ is the \textbf{coproduct}. In the category of topological spaces, coproduct is disjoint union.

\begin{example}
Another important example of a colimit is a \textbf{coequalizer}, which is obtained from the indexing category with two objects $0$ and $1$ and two non-identity morphisms from $0$ to $1$. A diagram $F(J)$ in $\C$ consists of two objects $A$ and $B$, and two arrows $f,g: A\to B$. The coequalizer is an object $C$ together with $h: B\to C$ such that $h(f(a)) = h(g(a))$, as usual satisfying the universal property that any other $h_2: B\to D$ with this property factors through $C$ (this means, $C$ is the `closest' object to $B$ making $f$ and $g$ equal). One place coequalizers arise in practice is the tensor product. If $R$ is a commutative ring and $M$ and $N$ are $R$-modules, then we can compute the tensor product $M\otimes_R N$ as the coequalizer of the diagram $M \coprod R \coprod N\stackrel{\to}{\to} M\coprod N$ where one of the two maps is the right-action of $R$ on $M$ (i.e., takes $M\coprod R \to M$) and the other is the left-action of $R$ on $N$ (taking $R\coprod N\to N$). The universal property of coequalizer exactly says that one can move multiplication by $r\in R$ across the tensor product, i.e., $m\otimes rn = mr\otimes n$.
\end{example}

\begin{example}
Another important example of an indexing diagram $J$ is the infinite chain $\bullet \to \bullet \to \bullet \to \dots$. For example, a functor $F: J\to Top$ might pick out an infinite collection of spaces and maps $X_n \to X_{n+1}$ illustrating how to glue on one cell at a time. The colimit is the space that the diagram is `trying to be' in the same way that a limit of a sequence of numbers is the number that the sequence is trying to be. We call such a colimit a \textbf{directed colimit}. Note that this indexing diagram is infinite. Some authors only require $\C$ to have \textbf{finite colimits}, meaning colimits of finite diagrams, like the two preceding examples.
\end{example}

A final example of colimit that we'll discuss in more detail describes a group action on an object of $\C$. 

\begin{example}
Let $G$ be a finite group and let the indexing category $J$ be the category $G$ that has one object, $|G|$ arrows (one for each element of the group), and composition law given by the group law (i.e., $f_\sigma \circ f_\tau = f_{\sigma \cdot \tau}$). In this case a functor $F:G \to \C$ simply picks out one object $M\in \C$ and lets $G$ act on $M$ via automorphisms of $M$, one assigned to each group element $g\in G$. Functoriality assures us that the action law $g_1\cdot (g_2 \cdot m) = (g_1*g_2)\cdot m$ is satisfied. In the category of smoooth manifolds, the fact that $G$ sits inside Aut$(M)$ tells us $G$ is isomorphic to a group of diffeomorphisms, so the action map is necessarily smooth even though the group $G$ need not be a Lie group or have a smooth structure. The colimit of this diagram is the object $M/\sim$ where we have identified $m_1$ and $m_2$ if there is some $g$ such that $g\cdot m_1 = m_2$. This object is also known as the \textbf{orbit space} or \textbf{space of coinvariants} $M_G$, and is an important object in equivariant homotopy theory.
\end{example}

We need one more definition. For a cardinal $\kappa$, an object $X$ is called \textbf{$\kappa$-small} if $Hom(X,-)$ commutes with $\kappa$-directed colimits, i.e., colimits of functors from $\kappa$ to $\C$ \cite{hovey-book}.

\begin{definition} \label{defn:locally-pres}
A category is called \textbf{locally presentable} if it has all small colimits and if there is regular cardinal $\kappa$ and a set $S$ of $\kappa$-small objects, such that every object can be obtained as a $\kappa$-filtered colimit of a diagram of objects in $S$.
\end{definition}

For example, the category of sets is locally presentable because every set is a directed colimit of its finite subsets, so we take $\kappa = \aleph_0$ and take $S$ to be the set of finite subsets. Similarly, the category of simplicial sets is locally presentable \cite[Chapter 3]{hovey-book}.

\subsection{Model categories}

Model categories were invented in 1967 by Dan Quillen, whose key observation was that in $Top$ one cares not only about weak equivalences, but also about cofibrations and fibrations \cite{hovey-book}. Cofibrations are morphisms that are used to build more complicated spaces from simpler ones, e.g., morphisms that glue new cells to old spaces. Fibrations are the homotopy theorist's version of projection, e.g., the morphism from the total space to the base space in a vector bundle, or projection from a Lie group onto its quotient by a subgroup. With these classes of morphisms in mind, we now define the notion of a model category.

\begin{defn} \label{defn:model-category}
A {\bf model category} is a category $\M$ with all limits and colimits, and with three distinguished classes of morphisms called weak equivalences $\sW$, cofibrations $\sQ$, and fibrations $\sF$ such that

\begin{itemize}
\item $\sW$ satisfies the 2-out-of-3 property, i.e., if two out of the three morphisms $f, g, g \circ f$ are in $\sW$, then so is the third.

\item $\sW, \sQ, \sF$ are closed under retracts, where $f$ is a retract of $g$ if $f$ and $g$ fit into a commutative diagram where the horizontal composites are $id_A$ and $id_B$ respectively:
\[
\xymatrix{A\ar[r] \ar[d]_f & C\ar[r] \ar[d]^g & A\ar[d]^f \\
B \ar[r] & D \ar[r] & B}
\]
\item Trivial cofibrations (i.e., morphisms in $\sQ \cap \sW$) satisfy the \textit{left lifting property} with respect to fibrations, i.e., when $f \in \sQ \cap \sW$ and $g\in \sF$ then the lift below exists and makes both triangles commute.
\[
\xymatrix{A \ar[r] \ar[d]_f & C\ar[d]^g \\ B\ar[r] \ar@{..>}[ur] & D}
\]
Similarly, cofibrations satisfy the left lifting property with respect to trivial fibrations. Dually, we say fibrations satisfy the \textit{right lifting property}.
\item Any morphism $f$ can be factored into two composites: either a cofibration followed by a trivial fibration or a trivial cofibration followed by a fibration. Furthermore, the assignment of $f$ to any of these factoring morphisms is functorial.
\end{itemize}
\end{defn}

The benefit of model categories is that they have homotopy categories $\ho(\M) := \M[\sW^{-1}]$ and the morphisms in these homotopy categories can be understood via the structure on $\M$. Let $\emptyset$ be the initial object of $\M$ and $\ast$ be the terminal object. If $\emptyset = \ast$ then we say $\M$ is \textit{pointed}.

\begin{defn}
An object $A$ is \textit{cofibrant} if the natural morphism $\emptyset \to A$ is a cofibration. Dually, an object $B$ is \textit{fibrant} if $B\to \ast$ is a fibration. 
\end{defn}

Let $\M_{cf}$ denote the subcategory of cofibrant and fibrant objects of $\M$ (a.k.a., bifibrant objects). There is an equivalence of categories $\M_{cf}/_\sim \cong \ho(\M)$ where $\sim$ is the homotopy relation \cite[Theorem 1.2.10]{hovey-book}.

The classes $\sQ$ and $\sF$ have the added benefit of providing cofibrant and fibrant replacement functors on $\M$. These are methods of functorially replacing objects by nicer objects that are the same up to homotopy (i.e., isomorphic in $\M[\sW^{-1}]$).  

\begin{defn}
For any object $X$ in a model category $\M$, the \textit{cofibrant replacement} of $X$ is the cofibrant object $QX$ and the morphism $QX \to X$ obtained by applying the cofibration-trivial fibration factorization to the natural morphism $\emptyset \to X$. So $QX \to X$ is a trivial fibration with cofibrant domain, and any morphism $f:X\to Y$ yields a morphism $Qf:QX\to QY$ by functoriality. 

Dually, the \textit{fibrant replacement} of an object $A$ is a fibrant object $RA$ together with a trivial cofibration $A\to RA$. Fibrant replacement is obtained by applying the trivial cofibration-fibration factorization to the natural morphism $Y\to \ast$. A morphism $g:A\to B$ gives rise to a morphism $Rg:RA\to RB$ by functoriality.
\end{defn}

In $Top$, cofibrant replacement is cellular approximation. In $Ch(R)$ cofibrant replacement can be projective resolution, or fibrant replacement can be injective resolution. The fact that $Ch(R)$ is a model category allows for hands-on constructions within the derived category $\mathcal{D}(R)$, leading to a better understanding of the morphisms between two objects, the triangulated structure, derived functors, and algebraic invariants such as Andr\'{e}-Quillen cohomology.

The correct notion of a functor between model categories (i.e., one that respects the homotopy-theoretic structure) is that of a left Quillen functor.

\begin{defn}
Let $\M$ and $\cat N$ be model categories and let $F:\M \leftrightarrows \cat N:U$ be an adjoint pair of functors (where $F$ is left adjoint to $U$). We say $F$ is a \textit{left Quillen functor}, if it preserves cofibrations and trivial cofibrations. We say $U$ is a \textit{right Quillen functor} if it preserves fibrations and trivial fibrations.

We say the pair $(F,U)$ is a \textit{Quillen equivalence} if it descends to an adjoint equivalence of categories $\ho(\M)\leftrightarrows \ho(\cat N)$.
\end{defn}

\section{Categories of manifolds lack colimits} \label{sec:colimits}

In this section, we provide concrete examples illustrating that various categories of manifolds lack colimits, and hence cannot be model categories. Because some authors only require model categories to have finite limits and colimits, we focus our counterexamples on finite colimits. We fix the following notation:

\begin{itemize}
\item $Mnfd$ is the category of smooth manifolds, with smooth maps.
\item $TopMnfd$ is the category of topological manifolds, with continuous maps.
\item $LieGp$ is the category of Lie groups.
\end{itemize}

In order to do homotopy theory with manifolds, we'd like to know whether these categories are closed under colimits. The answer is no, as we will now demonstrate.

The first idea for finding a counterexample is to try gluing things together. An easy example is to take two circles and glue them together at a point (this is a pushout in $Top$, of the span $S^1 \gets \ast \to S^1$) to get a space that looks like a figure eight, a space usually denoted $S^1 \vee S^1$. At the point where the two circles are glued together, there is no local neighborhood homeomorphic to $\mathbb{R}$. For the sake of completeness, we show how to check that this space really is the colimit.

\begin{example} \label{ex:pushoutMnfd}
The pushout of $S^1 \gets \{0\} \to S^1$ is $P = S^1 \vee S^1$, which we view as a subspace of $\mathbb{R}^2$, as two circles of radius 1, centered on the points (-1,0) and (1,0), and intersecting at (0,0). To show this, note that the diagram maps to $P$, since we can include the left copy of $S^1$ as the circle around (-1,0) (call this map $i$), the right copy as the circle around (1,0) (call this map $j$), and $\{0\}$ as $\{(0,0)\}$. If the diagram maps to any other object $Y$ (say by maps $f$ from the left copy of $S^1$ and $g$ from the right copy) then there is a unique map from $P$ to $Y$ by sending a point $i(a)$ to $f(a)$ and sending $j(b)$ to $g(b)$, and using the fact that $i(0) = j(0) = (0,0)$ will map to $f(0) = g(0)$. This proves that $P$ has the universal property of the pushout, so $P$ is the pushout. However, $P$ is clearly not a manifold because the origin doesn't have a locally Euclidean neighborhood.
\end{example}

There is one subtlety that the proof above leaves out. We computed the pushout \textit{in the category of topological spaces} and saw that it's not a manifold. But how do we know that the pushout in the category of manifolds is the same as the pushout in the category of topological spaces? Clearly, we need a proof by contradiction, and this can be accomplished in a way analogous to Example \ref{ex:schemes} below. However, because it's easier and more broadly applicable, we now explain a different way to prove that this span does not have a colimit in $Mnfd$. In the proof above, $Y$ was allowed to be any topological space. If we tried to run this proof in $Mnfd$, it wouldn't work because $P$ is not an object in $Mnfd$. If we try to run the proof in $Top$ but where $Y$ is only allowed to be a manifold (and we only consider smooth maps everywhere), then we're secretly relying on an unproven hope that the inclusion of $Mnfd$ into $Top$ respects colimits. That hope is actually false, but this is the right idea. We need to use a category other than $Top$.

One way to prove that a given diagram in $Mnfd$ does not have a colimit, is to embed $Mnfd$, in a colimit-preserving way, into a category that has all colimits, and then prove that the colimit, computed in that category, is not in the image of the embedding.\footnote{The author learned of this strategy from https://mathoverflow.net/questions/19116/} For $Mnfd$ to be a \textbf{full subcategory} of some $\C$ simply means that all the morphisms between two manifolds $M$ and $N$ are still morphisms between their images in $\C$.

For example, the contravariant functor taking a manifold to the space of smooth functions $C^\infty(M,\mathbb{R})$ preserves colimits. Note that when we say a contravariant function preserves colimits, this means it takes colimits to limits. This can be used to prove that the span above does not have a pushout in the category of smooth manifolds (or Lie groups) because if it did, then the resulting manifold would have a point with a 2-dimensional tangent space. That can be verified by letting $I$ be the ideal of functions that vanish at the bad point (the origin in our example above), and then computing that $\dim_{\mathbb{R}} I^2/I^3 = 2$, whereas at every other point, the tangent space has dimension 1. An analogous proof, using the space of continuous functions, works to prove that $TopMnfd$ does not have pushouts in general.

Another category that the category of smooth manifolds embeds into in a colimit-preserving way is the category $\cat F$ of Fr\"{o}licher spaces \cite[Definition 2.5]{stacey}. This $\cat F$ has both limits and colimits, so can be used to address questions about the existence of limits as well. Fr\"{o}licher spaces are determined by smooth maps to/from $\mathbb{R}$, and hence it is easy to argue as in Example \ref{ex:pushoutMnfd} that the pushout of our bad span is $S^1\vee S^1$ in $\cat F$ (because, any smooth function from the pushout to $\mathbb{R}$ factors through the inclusion of the pushout into $\mathbb{R}^2$) and that this object is not in the image of the embedding.

This example of a simple pushout that causes trouble in the local structure of a manifold can be tweaked to provide similar counterexamples showing that the following categories do not have finite colimits and hence cannot have model structures: complex manifolds (lacks the pushout of $\mathbb{C}\gets \{0\}\to \mathbb{C}$), complex analytic spaces, manifolds with boundary, and symplectic, Riemannian, and piecewise-linear manifolds.  A related example is that the pushout of the span $\R \gets (\R - \{0\}) \to \R$, in $Top$, is the line with double origin, and is not even Hausdorff, so cannot be a manifold. However, that span actually does have a pushout in the category of manifolds (namely, $\R$ itself), so we didn't choose this example as our counterexample above.

These example can also be tweaked\footnote{As has been done in https://mathoverflow.net/questions/9961} to show that the category of schemes does not have all colimits. Let $k$ be a field and let $\A^1$ denote the affine line. Recall that the category of affine schemes is isomorphic to the opposite category of $k$-algebras.

\begin{example} \label{ex:schemes}
Consider the diagram Spec$(k(t))\stackrel{\to}{\to} \A^1 \coprod \A^1$, where the two maps are the two inclusions. This diagram does not have a coequalizer in the category of schemes. Morally, the coequalizer is trying to be two copies of the affine line glued together along their generic points or, equivalently, $\A^1$ with all closed points doubled, like the line with two origins discussed above. But this object is too non-separated to be a scheme. This coequalizer can be computed in the category of algebraic spaces, just as we did above enlarging $Mnfd$ to the category of Fr\"{o}licher spaces.

For the sake of contradiction, suppose that this diagram does have a coequalizer, $C$, in the category of schemes, and let $f: \A^1\coprod \A^1 \to C$ be the coequalizer morphism. Let $U$ be an affine open set containing the generic point of $C$. Then $f^{-1}(U)$ yields dense $V_1\coprod V_2$ in $\A^1\coprod \A^1$ and we can define $W = V_1\cap V_2$ inside $\A^1$. When we compute $f(W\coprod W)$ we see that $f$ must take some pair $p,q$ of disjoint closed points to the same point in $U$. However, this violates the universal property, because the map $g$ from $\A^1 \coprod \A^1$ to $\A^1$ with the point $q$ doubled, must factor through $f$, but $g$ separates $p$ and $q$, meaning $f$ cannot send those two points to the same point in $C$.
\end{example}

This example illustrates that the category $Sch_k$ of $k$-schemes cannot be a model category, because it's not cocomplete, and this deficiency inspired Voevodsky's Fields Medal winning approach to algebraic geometry via motivic homotopy theory, which we will discuss in Section \ref{sec:presheaves}. Interestingly, the category $Aff$ of affine schemes over a field $k$ \textit{is} bicomplete, because its opposite category, the category of $k$-algebras, is bicomplete. However, clearly we would not want to do homotopy theory in the category of affine schemes. First, a scheme is glued together from affine schemes using the Zariski topology, and so every non-affine scheme is a colimit of affine schemes in a natural way that the category $Aff$ doesn't `see.' Secondly, while $Aff$ has all colimits, it does not have \textit{homotopy colimits} because when you compute colimits in $Aff$, you can get the wrong answer, geometrically. Specifically, as observed in \cite[Example 2.1.1]{Dugger98}, the span $\A^1 \gets \A^1 - \{0\} \to \A^1$, where the left map is the inclusion $z\mapsto z$ and the right map is $z\mapsto \frac{1}{z}$ has a pushout in both $Aff$ and in $Sch_k$, but they disagree. In $Sch_k$, the pushout is related to the projective line. In $Aff$, the pushout is the terminal object, $Spec(k)$, because $k$ is the intersection of $k[z]$ and $k[z^{-1}]$ inside $k[z,z^{-1}]$. To fix this, we can enlarge $Aff$ by formally adding homotopy colimits, and it turns out we obtain the same enlargement of $Sch$ we get by formally adding colimits, as we will see in Section \ref{sec:presheaves}.

\section{Specific Colimits of Interest in $Mnfd$} \label{sec:specific-colimits}

In this section, we investigate whether putting restrictions on the types diagrams (or on the manifolds therein) can fix the issue identified in the previous section, and result in a class of diagrams that do have colimits. One place this kind of philosophy came up historically was the introduction of homogeneous spaces as manifolds $M$ that have a transitive action of a group $G$. However, what one does with homogeneous manifolds is not to quotient by the group action (yielding $M/G$) but rather to realize $M$ as $G/K$ where $K$ is the stabilizer of a point. This is also a colimit construction, and tells us that homogeneous manifolds are the manifolds that can be realized as a (nice) colimit in the category of Lie groups (since $K$ is a Lie subgroup of $G$ that acts on $G$ by left multiplication). Despite the success of homogeneous manifolds, we will learn that in general, even very nice diagrams of very nice manifolds still do not have colimits.

Thinking about Example \ref{ex:pushoutMnfd}, it is natural to wonder if the issue is that the manifolds appearing have different dimensions (since $S^1$ is a 1-manifold but $\{0\}$ is a 0-manifold). Since the coproduct of two manifolds of dimension $d$ is a manifold of dimension $d$, while the coproduct of manifolds of different dimension is not a manifold, perhaps the colimit of a diagram of manifolds of the same dimension is again a manifold. This turns out to be false, as the following example shows:

\begin{example}
Let $(0,1)$ be the open interval, viewed as a 1-manifold. Let $i$ be the inclusion of $(0,1)$ to $\mathbb{R}$. Note that the pushout of the span $\mathbb{R} \gets (0,1) \to \mathbb{R}$, where both maps are $i$, has a point (namely, $0$) where locally it looks like an intersection of lines, rather than $\mathbb{R}^1$.
\end{example}

This example shows that even very nice diagrams (e.g., with morphisms that are open injections) of connected manifolds of the same dimension can fail to have colimits. 

So far we've shown that $Mnfd$ is poorly behaved with respect to even very nice pushouts. A simple example to show it's poorly behaved with respect to colimits of directed sequences is the sequence $\R \to \R^2 \to \R^3 \to \dots$, where the colimit $\R^\infty$ is not a manifold.

Lastly, we will show that colimits that come from group actions can take us outside the category of manifolds. The subject of group actions on manifolds has been studied much more than the other types of colimits. For example, in Riemannian geometry these considerations give a slick proof that $\pi_1(M)$ is countable for any manifold $M$, by viewing elements of $\pi_1$ as acting via deck transformations. If we plan to mod out by a group action, then for point-set topological reasons (i.e., to get the quotient to be Hausdorff) we need to know that the action is \textbf{properly discontinuous}. This means the action is proper, i.e., $G\times M \to M\times M$ via $(g,m)\mapsto (m,gm)$ is a proper map (inverse images of compact sets are compact) and that $G$ is discrete. We'll assume the action is proper, so that $M/G$ is Hausdorff. Quotients of second countable spaces need not be second countable, but open quotients (i.e., when $\pi:M\to M/G$ is open) are. Because of how $M/G$ is defined, the quotient map is always open. To see this, note that $\pi^{-1}(\pi(U)) = \bigcup_g gU$ is open and $\pi$ is a quotient map, so $\pi(U)$ is open in $M/G$. 

However, even though $M/G$ is Hausdorff and second countable, there are many proper actions where $M/G$ is not a manifold (not even a topological manifold). A concrete counterexample \cite[16.10.3.4]{dieudonne} is the action of $G = \{1,-1\}$ (a group under multiplication) on $M = \mathbb{R}$. The quotient $M/G$ is homeomorphic to the half closed ray $[0,\infty)$ and hence is not a manifold. A positive result is that, for a Lie group $G$, acting on a smooth manifold $M$ by diffeomorphisms (meaning $G\leq Aut(M) = Diffeo(M,M)$), $M/G$ is a differentiable manifold and $\pi: M\to M/G$ is a submersion, if and only if the set of pairs $(x,y)$ where $y=g\cdot x$ for some $g\in G$, is a closed submanifold of $M\times M$ \cite[16.10.3]{dieudonne}. 

An alternative way to get a positive result is to restrict to group actions that are \textbf{free}, meaning fixed-point free (i.e., all stabilizer groups are trivial). The Quotient Manifold Theorem \cite{Lee} says that, if $M$ is a smooth manifold (resp. Lie group) and $G$ acts smoothly, freely, and properly, then $M/G$ is a topological manifold of dimension equal to $\dim M - \dim G$ with a unique smooth structure such that $\pi: M\to M/G$ is a smooth covering map (resp. smooth submersion). Versions of this theorem can also be proven for $C^p$-manifolds rather than $C^\infty$-manifolds.

Indeed, this unique smooth structure makes $M$ into a principal $G$-bundle over $M/G$. Note that here proper but not properly discontinuous is needed. This is one benefit of working in the Lie group context. Another benefit of working in the Lie group context is that any continuous action of a compact Lie group on a topological manifold is proper. However, it is disappointing that even Lie group actions must be free in order to conclude that the orbit space is a manifold. In equivariant homotopy theory, we can rarely reduce our attention to free group actions, and hence the counterexamples in this section illustrate that we will need to enlarge the category of manifolds to a bicomplete category, before we can study manifolds using model categories.

\section{Embedding into a cocomplete category} \label{sec:enlarge}

In this section, we investigate various ways to enlarge the category of manifolds to deal with the lack of colimits. As with the previous sections, we include ideas that do not work, to illustrate the problems with various natural approaches, and warn the reader away from such bad avenues. This section motivates the next section, which explains an idea that works well.

\subsection{Orbifolds}

In the previous section, we saw that it is possible to have a group action $G$ on a manifold $M$ such that the quotient $M/G$ is no longer a manifold. One way to get around this kind of situation is to shift from working with manifolds to orbifolds. Since manifolds are defined locally, perhaps we should allow local group actions, that can be different (and even different groups) in each chart. 

An \textbf{orbifold} is a topological space that is locally a quotient of Euclidean space by the linear action of a finite group. So if one starts with a $G$-manifold $M$ then $M/G$ is an orbifold, but not all orbifolds arise this way because general orbifolds allow different groups to act in different neighborhoods. Furthermore, even though orbifolds fix the problem that $Mnfd$ is not closed under passage to coinvariants, the category $Orbi$ of orbifolds is not cocomplete. So moving from $Mnfd$ to $Orbi$ gives some colimits (those related to group actions) but not all (e.g., pushouts and coequalizers can fail, and our examples from Section \ref{sec:colimits} still apply).

\subsection{Poincar\'{e} Spaces}

John Klein has recently proposed another setting for questions in differential topology. A Poincar\'{e} space (also known as a Poincar\'{e} duality space) is, informally, a space that admits Poincar\'{e} duality. Examples include compact manifolds, homology manifolds (that is, spaces with a homology-isomorphism to a manifold), or the space $(S^2 \vee S^3) \cup_{\alpha} D^5$, which does not have the homotopy type of a manifold. Sometimes, a natural colimit operation on manifolds with boundary will yield a Poincar\'{e} space. For example, the amalgamated union of two $n$-manifolds with boundary is a Poincar\'{e} $n$-complex, as is the quotient of a Poincar\'{e} complex by a free action of a finite group \cite{klein}. We will now give a formal definition.

\begin{definition} \label{defn:poincare}
A topological space $X$ is \textbf{homotopy finite} if it is homotopy equivalent to a finite CW complex. A space \textbf{finitely dominated} if it is a retract of a homotopy finite space. A space is a \textbf{Poincar\'{e} space} of dimension $d$ is a triple $(P,L,[P])$ where $P$ is a finitely dominated space, $L$ is a rank one torsion-free local coefficient system $L$ of abelian groups on $P$, and $[P] \in H_n(P; L)$ is a homology class such that for all local systems $B$, the homomorphism $\bigcap [P]: H^*(P; B) \to H_{d-*}(P; L\otimes B)$, obtained via the cap product, is an isomorphism.
\end{definition}

Note that Poincar\'{e} spaces lack local structure, so many techniques from manifold theory cannot be used in this setting. For a fixed dimension $n$, let $PD$ be the topological category (meaning, enriched in $Top$, which means in this case that the hom-sets are topological spaces) whose objects are Poincar\'{e} spaces of dimension $n$, and where the space of morphisms from $P$ to $Q$ is the space $E(P, Q)$ of Poincar\'{e} embeddings from $P$ to $Q$ \cite{klein}.

Unfortunately, the category $PD$ is not closed under colimits. For example, colimits can build very large spaces out of homotopy finite spaces, that have no chance to be finitely dominated. Nevertheless, Poincar\'{e} spaces may be well-suited to questions in functor calculus, and so we will return to them in Section \ref{sec:presheaves}.

\subsection{Diffeological spaces}

Passing from manifolds to orbifolds, we give up some of the nice behavior we are accustomed to with manifolds, but we obtain more colimits. In this section, we pass from manifolds to diffeological spaces, giving up even more of our geometric intuition, but obtaining a bicomplete category that has a model structure. The category of diffeological spaces can be thought of as sitting in between $Mnfd$ and $Top$, with more geometric information than one has in $Top$.

\begin{defn}(\cite[Definition 2.1]{Wu}) A \textbf{diffeological space} is a set $X$ with a family of maps $U \to X$ (called \textbf{plots}) for each open $U \subset \R^n$ and each n, such that for every open
$U, V \subset \R^n$ we have 
\begin{enumerate}
\item Every constant map $U \to X$ is a plot.
\item If $U \to X$ is a plot and $V \to U$ is smooth, then the composition $V \to X$ is a plot.
\item If $U = \bigcup U_\alpha$ and $U \to X$ is a map such that every restriction $U_\alpha \to X$ is a plot, then $U \to X$ is a plot.
\end{enumerate}
\end{defn}

The morphisms in $Diff$ are maps $X\to Y$ such that for all plots $U\to X$ the composition $U\to Y$ is a plot. The category of smooth manifolds embeds as a full subcategory of $Diff$ where charts are plots and smooth maps of manifolds give maps in $Diff$. Furthermore, $Diff$ is closed under limits and colimits, which is easily seen by noting that the forgetful functor to $Sets$ preserves both limits and colimits because it has both left (discrete diffeology) and right (indiscrete diffeology) adjoints. For example, if $Y$ is a quotient of $X$ then we can declare $f:U\to Y$ to be a plot if for all $p\in U$ there is a neighborhood $V$ where $f$ is of the form $V \to X \to Y$. Similarly, if $A$ is a subset of $X$ then we can declare $f:U\to A$ to be a plot if $U\to A\to X$ is a plot.

In some ways $Diff$ is even nicer than $Top$. For instance, $Diff$ is Cartesian closed, meaning the space of smooth maps between any two diffeological spaces is a diffeological space. This fails for manifolds and fails for topological spaces (unless we restrict attention to compactly generated weak Hausdorff spaces and use the compact-open topology on $C(X,Y)$). Furthermore, $Diff$ is locally presentable but $Top$ is not. Finally, $Diff$ admits lots of geometry, e.g., dimension, differential forms, de Rham cohomology, tangent spaces, tangent bundles, etc. So the category $Diff$ is worth studying in its own right, and also permits the types of constructions (quotients, etc) that homotopy theorists need.

The category $Diff$ has a model structure \cite{kihara} defined in an analogous way to the Quillen model structure on $Top$. However, this model structure is quite difficult to produce, and so far has proven rather difficult to work with, because the standard $p$-simplex $\Delta^p$ does not deformation retract to its horns $\Lambda_k^p$, as occurs in $Top$ \cite{kihara}. This difficulty led Christensen and Wu to consider alternative generating (trivial) cofibrations, and to work out most of the model category axioms for their proposed model structure, but unfortunately, their conjectured model structure fails to satisfy all the model category axioms \cite{Wu}.

It is worth noting that the category of Lie groups sits inside the category of smooth manifolds, and also embeds as a full subcategory of $Diff$. Furthermore, Lie groups are particularly nice in $Diff$; they are fibrant, which means they carry information that is important to the homotopy theory of $Diff$. More generally, any homogeneous space is fibrant and one can prove from this fact that all manifolds in $Diff$ are fibrant, since the way in which $Mnfd$ embeds in $Diff$ sends individual manifolds to homogeneous spaces. The proof of this fact is to note that for any $M$, the group of self-diffeomorphisms $G = Diff(M,M)$ is a smooth group. So letting $H$ be the (smooth) subgroup of diffeomorphisms that fix any point gives $M\cong G/H$. This proof shows the strength of $Diff$, since the $G$ above can be less rigid than a Lie group would be in general, but still works for this construction.

While $Diff$ is a nice solution to the failure of colimits in $Mnfd$ and $LieGp$, it does not help us with topological manifolds, schemes, or Poincar\'{e} spaces. For that reason, we consider in the next section a more general way to enlarge categories of manifolds into bicomplete categories, namely categories of presheaves. Furthermore, this technique can be applied to embed $Diff$ into a category of presheaves that has a model structure with nicer properties than those discussed above \cite{bunk}.

\section{Embeddings into categories of presheaves} \label{sec:presheaves}

In this section, we explain how to embed a category $\C$ into a cocomplete category of presheaves, which can be endowed with a model structure. 

\subsection{Presheaves}

We begin with the basic definition of a presheaf \cite[I.1]{moerdijk}.

\begin{definition} \label{def:presheaf}
Let $\cat D$ be a category. A \textbf{presheaf} on $\cat D$ is a functor $F: \cat D^{op} \to Set$. The category of presheaves $Pre(\cat D)$ has natural transformations as morphisms, and is a locally small category (meaning, there is only a set of morphisms between any pair of objects) if $\cat D$ is a small category.
\end{definition}

The most basic example of a presheaf is a \textbf{representable presheaf} $R_X = Hom(-,X)$, and the Yoneda embedding $\cat D \to Pre(\cat D)$ taking $X$ to $R_X$ is full and faithful, allowing us to view $Pre(\cat D)$ as an extension of $\cat D$. Unfortunately, our categories of manifolds are not small, which inspires the following definition \cite{chorny-white}.

\begin{definition}
If $\cat C$ is a category, a \textbf{small presheaf} on $\cat C$ is a small colimit of representable functors.
\end{definition}

For experts, we mention that small colimits are left Kan extensions of presheaves defined on a small subcategory of $\cat C$ \cite{chorny-white}. We will denote the category of small presheaves by $Pre(\cat C)$, since it is clear from context whether $\cat C$ is a small category or not. The category $Pre(\cat C)$ is the free cocompletion of $\cat C$, meaning it satisfies a universal property identifying it as the `smallest' category that $\cat C$ embeds into that is cocomplete \cite{chorny-white}. Hence, if we want to work in a category with all small colimits, $Pre(\cat C)$ is the most natural candidate. 

If one wants a model category, then the most natural candidate is the category $sPre(\cat C)$ of presheaves into the category of \textit{simplicial} sets. This category can be endowed with the \textbf{projective model structure}, where the weak equivalences (resp. fibrations) of presheaves are natural transformations $N: F\to G$ that are objectwise ($N_X: F(X)\to G(X)$) weak equivalences (resp. fibrations). Cofibrations are then defined by the left lifting property. Alternatively, $sPre(\cat C)$ has the \textbf{injective model structure}, with weak equivalences and cofibrations defined objectwise.

\subsection{Sheaves and sheafification}

We now review sheaves, sheafification, and Grothendieck topologies, following \cite{moerdijk}. Classically, presheaves as in Definition \ref{def:presheaf}, have domain category $\cat D$ the category of open subsets of some topological space $X$, and there are restriction morphisms $r: F(U) \to F(V)$ whenever $V\subset U$; we denote this restriction of $s\in F(U)$ by $s|_V$. A presheaf $F$ is a \textbf{sheaf} when it satisfies locality and gluing axioms with respect to open covers $\{U_i\}_{i\in I}$. Locality means that two sections $s,t\in F(U)$ such that $s|_{U_i} = t|_{U_i}$ for all $i\in I$ must be equal, i.e., it is sufficient to test equality locally. The gluing condition says that any family of sections $s_i \in F(U_i)$ that agree on overlaps, i.e., $s_i|_{U_i\cap U_j} = s_j|_{U_i\cap U_j}$, assembles to a section $s\in F(U)$ such that $s|_{U_i} = s_i$ for all $i\in I$.

Sheafification for more general domain categories $\cat D$ requires a generalization of the notion of an open cover. The analogue of an open cover of an object $X$ is a \textbf{sieve}, defined as a subfunctor of the functor $\Hom(-,X)$ \cite[I.4]{moerdijk}. If $X$ is a topological space, a sieve is a downwards closed subset of open sets. Note that any morphism $f: Y\to X$ in $\cat D$ yields a pullback functor $f^*$ taking sieves on $X$ to sieves on $Y$. 

Next, we require the domain category $\cat D$ of Definition \ref{def:presheaf} to have a \textbf{Grothendieck topology} $\tau$ \cite[Definition III.1]{moerdijk}. That means, for each object $X$ of $\cat D$, we require a collection $\tau(X)$ of sieves, satisfying axioms analogous to being a family of open covers of the objects of $\cat D$. Specifically, we require: 
\begin{enumerate}
\item (covering) for any object $X$, the sieve $\Hom(-,X)$ is in $\tau(X)$,
\item (base change) for any morphism $f: Y\to X$ in $\cat D$ and any $S\in \tau(X)$, the pullback $f^*(S)$ is in $\tau(Y)$, and 
\item (locality) if $S\in \tau(X)$ and if $R$ is a sieve on $X$ such that $f^*(R) \in \tau(Y)$ for all $f: Y\to X$ in $S$, then $R$ is in $\tau(X)$.
\end{enumerate}

The upshot of a Grothendieck topology is that it yields a notion of a \textbf{sheaf} \cite[III.4]{moerdijk} as a presheaf $P: \cat D^{op} \to Set$ such that for every object $X$ of $\cat D$, and each $S\in \tau(X)$, the following diagram is an equalizer  diagram in sets:
\[
\xymatrix{
P(X) \ar[r]^-e & \prod_{f\in S} \limits P(\dom f) \ar@<-.5ex>[r]^-{\strut\smash{p}} \ar@<.5ex>[r]_-{\strut\smash{a}} & \prod_{\substack{f,g \;\; f\in S \\ \operatorname{dom} f = \operatorname{cod} g}} \limits P(\operatorname{dom} g)}
\]
where $f,g$ range over composable pairs, $e(x) = \{P(f)(x)\}_{f\in S}$, $p(\{x_f\}_{f\in S})_{f,g} = x_{f\circ g}$ and $a(\{x_f\}_{f\in S})_{f,g} = P(g)(x_f)$. It is helpful to realize that $p$ arises from the composition in $\cat D$ while $a$ arises from the action relating $\cat D$ and $P$.
The existence of the equalizer $P(C)$ is analogous to the gluing condition, and the fact that it equalizes $p$ and $a$ is analogous to the locality condition. Specialized to the classic case where $\cat D$ is the collection of open subsets of $X$, the two definitions of `sheaf' agree. Sheaves can be described as presheaves that satisfy \textbf{descent} with respect to all covering sieves in $\tau$. The sheaf condition can be enforced by the \textbf{sheafification functor}, defined as the left adjoint to the inclusion functor from the category of sheaves into the category of presheaves. We will see below that sheafification can also be obtained as a Bousfield localization functor, using the framework of model categories.

\subsection{Presheaves of schemes}

In the 1990s, Vladimir Voevodsky wanted to use model categories to learn about schemes, abelian varieties, and motivic cohomology. Topological cohomologies theories, like $K$-theory and $MU$, can be studied using model structures on the category of spectra, the stabilization of the category of spaces. The lack of colimits in the category of schemes, as detailed in Example \ref{ex:schemes}, led Voevodsky to embed the category $Sm/k$ of smooth schemes over $k$ into the category of presheaves of simplicial sets on $Sm/k$, via the Yoneda embedding. Voevodsky endowed this category of presheaves with the injective model structure. The next step is to make the model structure respect sheafification.

For this, one first fixes a Grothendieck topology on the indexing category (here, $Sm/k$), since this determines what is meant by `sheaf.' Next, one defines the collection of local weak equivalences as morphisms of simplicial presheaves that induce isomorphisms on all sheaves of homotopy groups. One then applies left Bousfield localization (discussed in many places, e.g., \cite{batanin-white-eilenberg-moore}) to produce a new model structure whose weak equivalences are the local equivalences, and whose fibrant replacement functor is sheafification. Voevodsky invented this procedure and carried it out with respect to the Nisnevich topology on $Sm/k$ to obtain the model category $Spc$. We can see $X\in Sm/k$ sitting inside $Spc$ as the colimit of any Nisnevich cover of $X$. The Nisnevich topology sits between the Zariski and \'{e}tale topologies, which also yield model structures, but is best suited to the study of motivic cohomology that inspired Voevodsky. Lastly, Voevodsky realized that $\A^1$ should behave like the topological interval and hence should be contractible. The maps $X\times \A^1 \to X$ generate the $\A^1$-weak equivalences of $Spc$, and we again left Bousfield localize to come up with our final model structure for motivic spaces $Spc$, where these morphisms are weak equivalences. Note that the treatment given here follows \cite{Weibel}, whereas originally Voevodsky (\cite{Voev98}) used the category of simplicial sheaves of sets in the Nisnevich topology on $Sm/k$ and defined weak equivalences by way of the functor-of-points approach to schemes. 

With this category $Spc$ in hand, Voevodsky was able to study motivic cohomology and Milnor $K$-theory using model categories. One can stabilize $Spc$ to get a category of motivic spectra, where these cohomologies theories are representable, and Voevodsky was then able to prove the Milnor Conjecture, which relates the Milnor K-theory (mod 2) of a field F of characteristic $\neq 2$ to the Galois cohomology of $F$ with coefficients in $\Z/2\Z$. For this, in 2002, Voevodsky won the Field Medal. 

\subsection{Presheaves of manifolds}

In 1998, inspired by Voevodsky's approach, Dan Dugger sketched a program \cite{Dugger98} to embed the a category $\cat C$ into a category of presheaves, then endow this category with the injective model structure, then localize it with respect to a Grothendieck topology (obtaining the \v{C}ech model structure) so that fibrant replacement is sheafification, and then localize it again so that the interval of $\C$ (for manifolds, this is $\mathbb{R}$) becomes contractible, meaning that all the maps $R_X \times I \to R_X$ is a weak equivalence. The upshot of the \v{C}ech model structure is that, for every hypercover $U\to X$ in the Grothendieck topology, the induced map $\hocolim U_\alpha \to X$ is a weak equivalence. In particular, this implies that every manifold is the colimit of its atlas.

While Dugger's preprint was never completed, most of the ideas it contained have subsequently been worked out. First, in \cite{Dugger01}, Dugger showed how to embed any small category $\cat C$ into the universal model category built from $\cat C$, on the category $sPre(\cat C)$. This is universal in the sense that any map $\gamma: \cat C \to \cat M$ from $\cat C$ to a model category $\cat M$ factors through the Yoneda embedding $\C \to sPre(\cat C)$. When applied to the setting $\cat C = Sm_k$ of the previous section, this produces a model structure Quillen equivalent to Voevodsky's \cite[Proposition 8.1]{Dugger01}. The theory of the \v{C}ech model structure is discussed in \cite[Section 7]{Dugger01} and worked out fully in \cite{dhi}.

Dugger also applies his machinery to topological manifolds. Since the category of topological manifolds is not small, he defines a small indexing category $Man$ consisting of all topological manifolds that are contained in $\R^\infty$, and then proves that his universal model structure on $sPre(Man)$ is Quillen equivalent to the Quillen model structure on topological spaces, so no topological information is lost \cite[Proposition 8.3]{Dugger01}. An alternative to shrinking the indexing category to $Man$ would be to use the theory of small presheaves \cite{chorny-white}.

In Dugger's unfinished manuscript \cite{Dugger98}, he sketches his program for the category of smooth manifolds. The first part of this program was worked out in \cite{pedro}, which produces a model structure on the category of \textit{topological} presheaves, namely functors $F: \C^{op} \to Top$, where $Top$ now denotes the category of compactly generated weak Hausdorff spaces. The authors apply this machinery in the case where $\C$ is the topological category of smooth manifolds of a fixed dimension $d$ and codimension zero embeddings. They endow this $\C$ with a Grothendieck topology given by open covers. They then verify the existence of the \v{C}ech model structure in this context (since Dugger only ever considered \textit{simplicial} presheaves), and confirm that fibrant replacement in this model structure is homotopy sheafification. The authors go on to use this \v{C}ech model structure as a base for the Goodwillie calculus of functors. Again, instead of restricting the size of $\C$, they could have instead used small presheaves \cite{chorny-white}. An analogous program, constructing a model structure for complex manifolds by embedding them into simplicial presheaves then left Bousfield localizing (and relating the resulting model structure to the Oka principle) was carried out in \cite{finnur}. Subsequent work has carried out a similar program for Stein spaces and for complex analytic spaces.

The second part of Dugger's program, inverting the interval and proving that the resulting model category is Quillen equivalent to the category of topological spaces, was carried out by Bunk \cite{bunk}. We note that Bunk replaced the indexing category by the category $Cart$ of Cartesian spaces, i.e., the smooth manifolds that are diffeomorphic to $\R^n$ for some $n$, and considered again simplicial presheaves.

\begin{remark} \label{remark:delta-gen}
One difficulty of working with topological presheaves rather than simplicial presheaves involves local presentability (Definition \ref{defn:locally-pres}). The category of compactly generated weak Hausdorff spaces is not locally presentable, so neither is the category of topological presheaves. For this reason, it is more convenient to work with the \textit{projective} model structure on simplicial presheaves (as is done in \cite{pedro}) and, when doing left Bousfield localization, one uses that this category is \textit{cellular} rather than \textit{combinatorial} (these terms are defined in \cite{white-localization} among other places). This is further discussed in \cite{white-thesis}. An alternative approach, carried out in \cite{gutierrez-white-equivariant} and \cite{hovey-white}, is to work in the category of $\Delta$-generated spaces, which is indeed locally presentable and still contains all CW complexes and all compact Lie groups. The other hypothesis required for left Bousfield localization is left properness (see \cite{bous-loc-semi} for what happens in the absence of this hypothesis) and this is automatically satisfied for either simplicial or topological presheaves (with any of our choices of categories of spaces), as explained in \cite{Reedy-paper, white-commutative-monoids, white-yau}.
\end{remark}

The first part of Dugger's program is largely formal, and can therefore be carried out for other flavors of manifolds, such as those listed in the open problems below, once suitable Grothendieck topologies are identified. However, as noted in \cite{bunk}, the second part of the program (inverting the interval and establishing the Quillen equivalence with spaces) often requires real work. We therefore state the following open problems for readers interesting in gaining practice with the procedure discussed in this section.

\begin{problem}
Carry out the procedure described in this section to produce model structures on simplicial (or topological) presheaves on each of the following categories:

\begin{enumerate}
\item Piecewise-linear manifolds.
\item Symplectic manifolds.
\item Riemannian manifolds.
\item Lie groups.
\end{enumerate}

Subsequently, localize the model categories of presheaves with respect to a Grothendieck topology, then invert the interval, then determine whether or not the resulting model structure is Quillen equivalent to the Quillen model structure on simplicial sets (or topological spaces).
\end{problem}

\subsection{Presheaves of Poincar\'{e} spaces}

We next apply this machinery to the category $PD$ of Poincar\'{e} spaces, answering an open problem posed by John Klein. Let $Top^{PD}$ denote the category of topological functors $F: PD^{op} \to Top$. An example of such a functor is given by the embedding spaces $E(-,N)$ for fixed $N\in PD$. In a 2019 talk at the Ohio State University, Klein stated the following problem:

\begin{problem} \label{problem:klein}
Find a model structure on $\M = Top^{PD}$ in which
\begin{enumerate}
\item the homotopy sheaves are the fibrant objects, and
\item homotopy sheafification is fibrant approximation.
\end{enumerate}
\end{problem}

Here, by `homotopy sheaf,' Klein meant a functor satisfying homotopy descent with respect to a specified covering family that we now define. For fixed $k \leq \infty$, let $D_k$ be the full subcategory of $PD$ whose objects $(U,\partial U)$ have the homotopy type of the disjoint union of $j$ copies of $(D^n,S^{n-1})$ where $0\leq j \leq k$. Define a Grothendieck topology on $PD$ based on the collection of morphisms $U_\alpha \to P$ where $U_\alpha \in D_k$. This Grothendieck topology allows us to define a notion of a homotopy sheaf, as an object satisfying homotopical descent, i.e., $X$ such that $\hocolim U_\alpha \to X$ is a weak equivalence for any covering family $U_\bullet$.

\begin{theorem} \label{thm:poincare-model}
There is a model structure on $Top^{PD}$ satisfying the conditions of Problem \ref{problem:klein}.
\end{theorem}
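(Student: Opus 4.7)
The plan is to follow the second part of Dugger's program, as carried out in \cite{pedro, dhi} for manifolds, adapted to the category $PD$. The first step is to replace $Top^{PD}$ by a more tractable model category of presheaves. Because $PD$ is a topological category of proper-class size, and because compactly generated weak Hausdorff spaces do not form a locally presentable category, I would work with simplicial presheaves on a small (or essentially small) replacement of $PD$, either by restricting to Poincar\'{e} spaces whose underlying set is contained in some fixed large set (as Dugger does with $Man$ in \cite[Proposition 8.3]{Dugger01}), or by invoking the theory of small presheaves from \cite{chorny-white}. The resulting category $sPre(PD)$ carries the projective model structure, which is combinatorial and left proper; alternatively, one can work directly with topological presheaves as in \cite{pedro} and appeal to cellularity rather than combinatoriality, as discussed in Remark \ref{remark:delta-gen}.

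Next, I would encode the Grothendieck topology defined just before the theorem statement as a set $S$ of morphisms to localize against. For each covering family $\{U_\alpha \to P\}$ with $U_\alpha \in D_k$, include the canonical map $\hocolim R_{U_\alpha} \to R_P$ between representables in $S$; equivalently, one can include the maps out of the \v{C}ech nerves of such covers. Having restricted to a small indexing category in the first step ensures that $S$ is a set. I would then apply the left Bousfield localization theorem of \cite{white-localization} to produce the model structure $L_S sPre(PD)$; the required hypotheses were arranged in the first step. By the defining property of Bousfield localization, the fibrant objects are precisely the projectively fibrant presheaves $X$ for which the maps in $S$ induce weak equivalences on mapping spaces into $X$, which is exactly the homotopy descent condition defining homotopy sheaves, and fibrant replacement is therefore homotopy sheafification. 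Finally, transport this model structure to $Top^{PD}$ along the standard Quillen equivalence between simplicial and topological presheaves (or, via Remark \ref{remark:delta-gen}, work with topological presheaves throughout) to match Klein's original formulation.

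The main obstacle I expect lies in the first step: setting up a small skeleton of $PD$ that respects the topological enrichment through the Poincar\'{e} embedding spaces $E(P,Q)$, so that Yoneda still embeds $PD$ faithfully, hom-spaces remain genuine spaces of Poincar\'{e} embeddings, and covering families by $D_k$-objects collect into a set rather than a proper class. Once this foundational bookkeeping is done, the remaining localization argument is largely formal, assembling pieces from \cite{Dugger01, pedro, dhi, white-localization}. A secondary subtlety is making sure that one includes enough hypercovers, not merely covering sieves, in $S$ to force all $D_k$-hypercovers of a Poincar\'{e} space to become weak equivalences; this is handled by the \v{C}ech-to-hypercover comparison arguments of \cite{dhi}.
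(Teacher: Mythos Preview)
Your proposal is essentially the same strategy as the paper's: endow the presheaf category with the projective model structure, then left Bousfield localize at the maps $\hocolim R_{U_\alpha}\to R_P$ coming from the covering families, and read off that fibrant objects are homotopy sheaves and fibrant replacement is homotopy sheafification.

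The main divergence is in your first step, and it is an overcomplication rather than an error. You flag the size of $PD$ as the principal obstacle and propose either passing to a small skeleton or invoking the small-presheaf machinery of \cite{chorny-white}, together with a detour through simplicial presheaves to secure combinatoriality. The paper sidesteps all of this by observing that $PD$ is already (essentially) small, because Poincar\'{e} spaces are finitely dominated \cite[2.3]{klein}; hence the projective model structure on $Top^{PD}$ exists outright, and one can localize directly using cellularity of $Top$ as in \cite[3.2]{pedro}, with no need to change the target category or transport model structures back and forth. Your route still works, but the bookkeeping you anticipated is unnecessary.

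Your secondary concern about upgrading from \v{C}ech covers to hypercovers via \cite{dhi} is also not needed here: Klein's Problem~\ref{problem:klein} asks only for descent with respect to the specified $D_k$-covering families, so localizing at the \v{C}ech maps already yields exactly the fibrant objects required.
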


\begin{proof}
First, note that the site $PD$ is small, thanks to the finiteness conditions in the definition \cite[2.3]{klein}. Hence, the projective model structure on $Top^{PD}$ exists. Every object is fibrant, because every space is fibrant. Every representable presheaf is cofibrant, thanks to the enriched Yoneda Lemma, as discussed in \cite[Section 3]{pedro}. We next consider the collection of morphisms $\hocolim_{S\subset J} \C(-,U_S)\to \C(-,M)$ for each covering $\{U_\alpha \to M\}_{\alpha\in J}$. 
We left Bousfield localize with respect to this collection of morphisms, using that $Top$ is a cellular model category. We note that the proof in \cite[3.2]{pedro} works for any indexing category $\cat C$ and any Grothendieck topology $\tau$. The proof uses that $Top$ is a cartesian model category, but does not require $\cat C$ to be. The fibrant objects of this localization are precisely the homotopy sheaves with respect to the given Grothendieck topology, as in \cite[Theorem 3.6]{pedro}, and fibrant replacement is homotopy sheafification, as in \cite[Remark 3.7]{pedro}.
\end{proof}

In his 2019 talk in Ohio, Klein provided an overview of manifold functor calculus, i.e., the study of contravariant isotopy functors $F: \cat O_P \to Top$ where $\cat O_P$ is the poset of open subsets of a smooth manifold $P$, or to the study of enriched contravariant functors from a topological category of manifolds (with a chosen Grothendieck topology) to $Top$. The goal of manifold calculus is to decompose such functors $F$ into simpler pieces, just like the Taylor series decomposes a function into polynomial pieces. An example of a functor $F$ that one could decompose in this way is the functor $E^{diff}$ that assigns to smooth manifolds $U,N$ the space of smooth embeddings from $U$ to $N$. This functor can be decomposed into a tower of simpler functors (starting with the functor of immersions) that converge to $E^{diff}$, meaning $E^{diff}$ is equivalent to the colimit of the tower. Klein hoped that the model structure of Theorem \ref{thm:poincare-model} would be useful for similarly decomposing the space of Poincar\'{e} embeddings of Poincar\'{e} spaces. We therefore conclude with an open problem, to carry out this program.

\begin{problem} \label{problem:calc}
Use the model structure from Theorem \ref{thm:poincare-model} to set up functor calculus in the Poincar\'{e} setting, and use it to compute the Taylor tower of spaces of Poincar\'{e} embeddings.
\end{problem}

The first part is entirely formal, as has been explained in \cite{chorny-white} among other places. One first further localizes the model structure in Theorem \ref{thm:poincare-model} so that the new fibrant objects are the \textit{homotopy functors}, i.e., functors taking weak equivalences to weak equivalences. For this, it is convenient to replace $Top$ by the model category of $\Delta$-generated spaces as in Remark \ref{remark:delta-gen}, so that results from the theory of locally presentable categories may be used. With the homotopy model structure in hand, the next step is to left Bousfield localize to produce a new model structure whose fibrant objects are the $n$-excisive functors, as in \cite[Theorem 6.1]{chorny-white}, and where fibrant replacement produces the $n^{th}$ layer of the Taylor tower. Together, these model structures produce the Taylor tower. The second part of Problem \ref{problem:calc} will likely require geometric insight and will hopefully yield interesting applications to Poincar\'{e} spaces (such as those discussed in \cite{pedro}), as manifold calculus has done for categories of manifolds.

\end{document}